\DeclareMathOperator{\inte}{int}
\DeclareMathOperator{\gph}{gph}
\DeclareMathOperator{\Sol}{Sol}
\DeclareMathOperator{\OP}{OP}
\DeclareMathOperator{\Bo}{\mathbb{B}}
\DeclareMathOperator{\D}{\mathcal{D}}
\DeclareMathOperator{\Ri}{\mathcal{R}}
\DeclareMathOperator{\Oo}{\mathcal{O}}
\DeclareMathOperator{\Sa}{\mathcal{S}}
\DeclareMathOperator{\R}{\mathbb{R}}
\DeclareMathOperator{\N}{\mathbb{N}}
\DeclareMathOperator{\K}{\mathcal{K}}
\DeclareMathOperator{\Rc}{\mathcal{R}}
\title{On the Nonemptiness and Boundedness of Solution Sets of Weakly Homogeneous Optimization Problems}
\author{Vu Trung Hieu}
\institute{Vu Trung Hieu \at LIP6, Sorbonne University, Paris, France\\
    E-mail: trung-hieu.vu@lip6.fr}
\date{Received: date / Accepted: date}
\begin{document}

    \maketitle

    \begin{abstract} In this paper, we introduce a new class of optimization problems
whose objective functions are weakly homogeneous relative to the constraint
        sets. By using the normalization argument in asymptotic analysis, we prove two criteria for the nonemptiness and boundedness of the solution set of a weakly homogeneous optimization problem. Moreover, the normalization argument enables us to study the existence and
        stability of the solution sets of linearly parametric optimization problems. Several examples are given to illustrate the derived results.
    \end{abstract}

    \keywords{Weakly homogeneous optimization problem \and Asymptotic cone \and Pseudoconvex function \and Nonemptiness \and Boundedness \and Upper semicontinuity}

    \subclass{90C30 \and 90C31 \and 90C46}

    \section{Introduction} The class of weakly homogeneous functions, which contains the subclass of all
    polynomial functions, has been introduced in the setting
    of variational inequality problems by Gowda and Sossa in \cite{GoSo18}, and studied recently by Ma, Zheng, and Huang in \cite{MZH20}. This setting is a simpler case
    of the one used initially by Gowda and Pang \cite{GP92}, and later by Flores-Baz\'{a}n \cite{FB2010}.

    In the present paper, we introduce a new class of weakly
    homogeneous functions (see Definition~\ref{def:wH} in the next section), which is stronger in some sense than that has been introduced in
    \cite{GoSo18}, and investigate the existence as well as the stability of the solution sets of
    \textit{weakly homogeneous optimization problems}. Asymptotic analysis plays an important
    role  and the normalization argument (see, e.g., \cite{AT2003,Cottle,LTY05}) is used
    effectively for our study.

For a given weakly homogeneous optimization problem, its \textit{kernel} is defined by the solution set of the corresponding asymptotic optimization problem. The kernel is always a closed cone if it is
    nonempty. Based on the kernel notion, we establish two criteria for the nonemptiness and boundedness of the
    solution sets of weakly homogeneous optimization problems. The first one says that if the kernel is the trivial cone then the solution set is nonempty and bounded. The second one concerns the pseudoconvexity of the objective
    function when the kernel is non-trivial and the constraint set is convex. These results are considered as extensions of the Frank-Wolfe
    type theorem and the Eaves type theorem for polynomial optimization problems in
    \cite{Hieu2018}.

    The kernel and the domain of an affine variational inequality were introduced in \cite{FaPa03,GP}. They are useful in the study of the existence and stability of solution sets of parametric affine variational inequality problems. For weakly homogeneous optimization problems, these notions can be developed to study the existence and stability of solutions to \textit{linearly parametric optimization problems}.

    The organization of the paper is as follows.
    Section \ref{sec:pre} gives a brief introduction to
    asymptotic cones, weakly homogeneous functions, pseudoconvexity, and
    optimization problems.  Asymptotic problems are discussed in Section \ref{sec:asym} while two results on the nonemptiness and boundedness of solution sets are shown
    in Section \ref{sec:comp}. Sections \ref{sec:linear} and \ref{sec:usc} are devoted to
    investigate the existence and stability of solutions to linearly parametric
    optimization problems, respectively.

    \section{Preliminaries}\label{sec:pre}

    Let $S$ be a nonempty closed set in $\R^n$, its \textit{asymptotic cone} $S^{\infty}$ is defined \cite{AT2003}  by
    $$
    S^{\infty}=\Big\{ v\in\R^n:\exists t_k\to +\infty, \exists x^k\in S \text{ with
    } \lim_{k\to\infty} \frac{x^k}{t_k}=v\Big\}.
    $$
    It is well-known that $S^{\infty}$ is a closed cone and that $S^{\infty}$ is the
    trivial
    cone if and only if $S$ is bounded. Besides, if $S$ is convex then
    $S^{\infty}$ coincides with the recession cone of $S$ containing
    vectors $v \in\R^n$ such that $x+tv\in S$ for any $x\in S$ and $t\geq 0$, i.e.,
    $S=S+S^{\infty}$.

    For  an arbitrary cone $C$ in $\R^n$, we recall that the \textit{dual cone} $C^*$ of $C$ is given by $$C^*=\{u\in \R^n:\left\langle u,v \right\rangle \geq 0, \forall v\in C\}$$ and that
    $u$ belongs to the interior of $C^*$, denoted by $\inte(C^*)$, if and
    only if
    $\left\langle u,v \right\rangle>0$ for all $v\in C$ and $v\neq 0$.

    \medskip
    Assume that $C$ is a closed cone in $\R^n$, $K$ is an unbounded and closed subset
    of $C$, and $f:C\to\R$ is a continuous function on $C$. Clearly, $K^{\infty}$ is a
    subset of $C$.
    \begin{definition}\label{def:wH} The function $f$ is said to be \textit{weakly homogeneous
            of degree $\alpha$ relative to $K$} if there exists a positively homogeneous
        continuous function $h$ of degree $\alpha>0$ on $C$, i.e.
        $h(tx)=t^{\alpha}h(x)$ for all $x\in C$ and $t>0$, such that
        $f(x)-h(x)=o(\|x\|^{\alpha})$ on $K$. Then, one says that $h$ is an
        \textit{asymptotically homogeneous function} of $f$ relative to $K$.
    \end{definition}

   Assume that $f$ is weakly homogeneous
    of degree $\alpha$ relative to $K$.  The asymptotically homogeneous function $h$ in Definition \ref{def:wH} may be not
    unique (see Example \ref{ex:1}). We denote by $[f_{\alpha,K}^{\infty}]$ the
    set of asymptotically homogeneous functions of $f$ relative to
    $K$, and by $\Oo_K^{\alpha}$ the space of all continuous
    functions $p$ on $C$, such that $p(x)=o(\|x\|^{\alpha})$ on $K$. Clearly, if
    $p\in \Oo_K^{\alpha}$, then $f+p$ is
    also weakly
    homogeneous of degree $\alpha$ relative to $K$.

    \begin{remark} The notion of weakly asymptotic function in Definition
        \ref{def:wH} is
        different from the notion in the monograph by Auslender and
        Teboulle~\cite[Definition 2.5.1]{AT2003} as well as that of Gowda and Sossa
        \cite{GoSo18}. In some sense, the notion in Definition \ref{def:wH} is
        stronger  (see Example \ref{ex:1} and Remark \ref{rmk:strong}) than that of
        \cite{GoSo18} in which it is required that $f(x)-h(x)=o(\|x\|^{\alpha})$ on the whole
        cone $C$ instead of only on the subset $K$. Hence, in Definition \ref{def:wH} we
        emphasize that $f$ is weakly homogeneous (of degree $\alpha$) \textit{relative
            to $K$}.
    \end{remark}

    \begin{example}\label{ex:1}
        Consider the cone $C:=\R_+^2:=\{(x_1,x_2)\in\R^2:x_1\geq 0,x_2\geq 0 \}$, the set
        $K:=\{(x_1,x_2)\in\R^2:(x_1-2)^2+(x_2-2)^2\leq 1
        \}\cup\{(x_1,x_2)\in\R^2:x_1\geq 1,x_2=0\},$ and the function
        $f(x_1,x_2):=x_1x_2+\sqrt{x_1}$. It is clear that
        $K^{\infty}=\{(x_1,x_2)\in\R^2:x_1\geq 0,x_2=0\}$ and $h(x_1,x_2)=\sqrt{x_1}$
        is an asymptotically homogeneous function of $f$ relative to $K$, i.e., $f$ is
        weakly
        homogeneous of degree $\alpha=1/2$ relative to $K$. We can see that $\bar
        h(x_1,x_2)=\sqrt{x_1}+\sqrt{x_2}$ is also a different asymptotically homogeneous
        function of $f$ relative to $K$; Clearly, $h$ is different from $\bar h$ on
        $\R_+^2$ but they have the same values on $K^{\infty}$.
        Meanwhile, the function $f$ is a weakly homogeneous function of degree $\alpha=2$ in the
        sense of \cite{GoSo18}, in which the involved asymptotic function is $\hat
        h(x_1,x_2)=x_1x_2$.
    \end{example}

    \begin{example}\label{ex:3}
        Let $K$ be an unbounded and closed subset of $\R^n$ and $f$ be a polynomial of degree $d$ in $n$ variables. Then, $f$ is weakly homogeneous of degree $\alpha$ relative to $K$, where $\alpha$ is a positive integer, $\alpha\leq d$.  Indeed, assume that $f=\sum_{\beta\in \mathcal{B}}a_{\beta}x^{\beta}$, where $\mathcal{B}$ is a finite subset of $\N^n$, $\beta=(\beta_1,\dots,\beta_n)$, $x^{\beta}=x_1^{\beta_1}\cdots x_n^{\beta_n}$, $|\beta|=\beta_1+\dots+\beta_n\leq d$, $a_{\beta}\in \R$. Let $I:=\{i: \exists x\in K^{\infty}, x_i\neq 0\}$ and $\mathcal{B}_I:=\{(\beta_1,\dots,\beta_n)\in \mathcal{B}: \beta_j=0 \text{ or } j\in I, \forall j=1,\dots,n\}$. We can check that the following homogeneous polynomial is an asymptotically homogeneous function of $f$ relative to $K$:
        $$h= \begin{cases}
            \sum_{\beta\in \mathcal{B}_I, |\beta|=\alpha }a_{\beta}x^{\beta} &\mbox {if } \ \mathcal{B}_I \neq \emptyset, \\
            0 & \mbox{if } \ \mathcal{B}_I = \emptyset,
        \end{cases}$$
    \end{example}
    where $\alpha: =\max\{|\beta|:\beta\in \mathcal{B}_I\}$.

    Let $K$ be a nonempty closed subset in $\R^n$
    and $f:K\to\R$ be a continuous function. The minimization problem with the constraint set $K$ and the objective function $f$ is written formally as follows:
    \begin{equation*}\label{OP} \OP(K,f): \ \ {\rm minimize}\ \; f(x)\ \ {\rm
            subject \ to}\ \  x\in K. \quad \quad
    \end{equation*}
    The solution set of
    $\OP(K,f)$ is denoted by $\Sol(K,f)$. By the closedness of $K$ and the
    continuity of $f$, $\Sol(K,f)$ is closed. Clearly, if $\Sol(K,f)$ is nonempty
    then $f$ is bounded from below on $K$. If $K$ is bounded, then $\Sol(K,f)$ is nonempty by the Bolzano-Weierstrass theorem.

    \begin{remark}\label{rmk2} Suppose that $K$ is a cone and $f$ is positively
        homogeneous function of degree $\alpha>0$ relative to $K$. We can see that if
        $\Sol(K,f)$ is nonempty then this set is a closed cone. Besides, it is not difficult to check that the following conditions are equivalent: $\Sol(K,f)$ is nonempty; $f$ is
        bounded from below on $K$; $0$ is a solution of $\OP(K,f)$; $f$ is
        non-negative on $K$. Furthermore, if one of these conditions is satisfied, then $\Sol(K,f)$ is the set of zero points of $f$ in $K$,
        i.e. $\Sol(K,f)=\{x\in K: f(x)=0\}$.
    \end{remark}

    Assume that $f$ is differentiable on an open subset $U\subset \R^n$. The gradient of $f$ is denoted by $\nabla f$.
    The function $f$ is said to be \textit{pseudoconvex} on $U$ if, for any $x,y\in U$ such that $\langle \nabla f(x),y-x\rangle\geq 0$, we have $f(y)\geq f(x)$. Recall that $f$ is pseudoconvex on $U$ if and only if $\nabla f$ is \textit{pseudomonotone} on $U$, i.e., if for any $x,y\in U$ such that $\langle \nabla f(x),y-x\rangle\geq 0$ then $\langle \nabla f(y),y-x\rangle\geq 0$ (see, e.g., \cite[Theorem 4.4]{ALM2014}). If $f$ is convex on $U$ then $f$ is pseudoconvex on $U$. Note that if $f$, $g$ are two convex functions on a convex set $K$ then the sum $f+g$ is convex on $K$.

    \begin{remark}\label{rmk:Minty} Assume that $K$ is convex and $f$ is pseudoconvex on an open set $U$ containing $K$. If $x^0\in \Sol(K,f)$ then $\langle \nabla f(x),x-x^0\rangle\geq 0$ for all $x\in K$. Indeed, since $x^0$ is a solution of $\OP(K,f)$, one has $\langle \nabla f(x^0),x-x^0\rangle\geq 0$ for all $x\in K$ (see, e.g., \cite[Proposition 5.2]{ALM2014}). The pseudomonotonicity of the gradient implies that $\langle \nabla f(x),x-x^0\rangle\geq 0$ for all $x\in K$. Conversely, if the point $x^0\in K$ satisfied $\langle \nabla f(x^0),x-x^0\rangle\geq 0$ for all $x\in K$ then $x^0\in \Sol(K,f)$ (see, e.g., \cite[Proposition 5.3]{ALM2014}).
    \end{remark}

Let $V$ be a closed cone contained in $K^{\infty}$. The following lemma provides a necessary condition for a vector to belong to the interior of $V^*-\nabla f(K)$. This lemma will be used in the proof of
    Theorems \ref{thm:4} and \ref{thm:usc_Kinfty}.

    \begin{lemma}\label{lm:conc}Assume that $f$ is differentiable on an open set $U$ containing  $K$. If $u\in\inte(V^*-\nabla f(K))\neq\emptyset$, then
        for each $v\in V\setminus\{0\}$ there is $x\in K$ such that
        $\left\langle \nabla f(x)+u,v \right\rangle> 0$.
    \end{lemma}

    \begin{proof}
Assume that $\inte(V^*-\nabla f(K))$ is nonempty and $u$ belongs to this set. There exists an $\varepsilon>0$ such that
        $\Bo(u,\varepsilon)\subset V^*-\nabla f(K)$,
        where $\Bo(u,\varepsilon)$ is the open ball of radius $\varepsilon$
        centered at $u$. Assume on the contrary that there is $\bar v\in
V\setminus\{0\}$  such that
        $\left\langle \nabla f(x)+u,\bar v \right\rangle\leq 0$ for all $x\in K$.
        Let $a$ be an arbitrary point in $\Bo(u,\varepsilon)\subset \inte(V^*-\nabla f(K))$. Then, there exist $w\in
V^*$ and $\bar x\in K$  such that $a=w-\nabla f(\bar x)$. This leads to
        $$ \left\langle a-u,\bar v \right\rangle=\left\langle w,\bar v \right\rangle-\left\langle \nabla f(\bar x)+u,\bar v \right\rangle\geq 0.$$
        As the latter holds for any $a\in \Bo(u,\varepsilon)$, it follows that $\bar v=0$; hence one gets a contradiction. \qed
    \end{proof}

    The following assumptions will be essential throughout the paper: \textit{$K$ is nonempty, closed, and unbounded; $f$ is continuous on the cone $C$ containing $K$;  $f$ is weakly homogeneous of degree $\alpha>0$ relative to $K$.}

    \section{Properties of the Asymptotic Optimization Problem}\label{sec:asym}
  This section shows some basic properties of the
    asymptotic optimization problem associated to $\OP(K,f)$. The
    asymptotic optimization problem plays a vital role to investigate the behavior of the original problem at infinity.

    The following proposition says that asymptotically homogeneous
    functions in $[f_{\alpha,K}^{\infty}]$ have the same values on the asymptotic cone $K^{\infty}$; hence the asymptotic optimization problem is defined uniquely.

    \begin{proposition}\label{prop:hh} Suppose that $h,\bar h$ are in $[f_{\alpha,K}^{\infty}]$. Then, $h=\bar h$ on $K^{\infty}$; hence	$\Sol(K^{\infty},h)=\Sol(K^{\infty},\bar h).$
    \end{proposition}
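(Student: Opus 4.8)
The plan is to prove the slightly stronger fact that $h$ and $\bar h$ \emph{coincide pointwise on} $K^{\infty}$; once this is known, the equality $\Sol(K^{\infty},h)=\Sol(K^{\infty},\bar h)$ is immediate, since the two problems $\OP(K^{\infty},h)$ and $\OP(K^{\infty},\bar h)$ have the same feasible set and the same objective value at every feasible point.

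First I would record the basic estimate: since $h,\bar h\in[f_{\alpha}^{\infty}]$, the difference decomposes as $h-\bar h=(h-f)+(f-\bar h)\in\Oo_K^{\alpha}$, that is, $h(x)-\bar h(x)=o(\|x\|^{\alpha})$ as $\|x\|\to\infty$ along $K$. Then I would fix an arbitrary $v\in K^{\infty}$ and argue by cases. If $v=0$, then $h(0)=\bar h(0)=0$, because a continuous positively homogeneous function of degree $\alpha>0$ vanishes at the origin (let $t\downarrow 0$ in $h(tx)=t^{\alpha}h(x)$). If $v\neq 0$, choose sequences $t_k\to+\infty$ and $x_k\in K$ with $x_k/t_k\to v$; since $\|x_k\|/t_k\to\|v\|>0$, one has $\|x_k\|\to+\infty$, so the $o$-estimate applies along $(x_k)$. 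Writing $h(x_k)=t_k^{\alpha}h(x_k/t_k)$ and $\bar h(x_k)=t_k^{\alpha}\bar h(x_k/t_k)$ by positive homogeneity and using continuity of $h$ and $\bar h$,
$$
h(v)-\bar h(v)=\lim_{k\to\infty}\frac{h(x_k)-\bar h(x_k)}{t_k^{\alpha}}
=\lim_{k\to\infty}\frac{h(x_k)-\bar h(x_k)}{\|x_k\|^{\alpha}}\cdot\Big(\frac{\|x_k\|}{t_k}\Big)^{\alpha}=0\cdot\|v\|^{\alpha}=0 .
$$
Hence $h\equiv\bar h$ on $K^{\infty}$, which yields the claim.

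The only delicate point is the use of the $o(\|x\|^{\alpha})$-estimate: it is legitimate only after one knows $\|x_k\|\to\infty$, which is exactly why the cases $v=0$ and $v\neq 0$ must be separated (the degenerate case $v=0$ being handled by the vanishing of homogeneous functions at the origin). As an alternative route one could instead invoke Remark \ref{rmk2} to identify each of $\Sol(K^{\infty},h)$ and $\Sol(K^{\infty},\bar h)$ with the zero set on the cone $K^{\infty}$ of the respective homogeneous function, but establishing $h=\bar h$ on $K^{\infty}$ directly is cleaner and makes the equality of the solution sets transparent.
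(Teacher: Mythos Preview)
Your proof is correct and follows the same overall strategy as the paper: show that $h$ and $\bar h$ agree pointwise on $K^{\infty}$, from which the equality of the solution sets is immediate.

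There is one technical difference worth highlighting. The paper, after picking $t_k\to+\infty$ and $x_k\in K$ with $t_k^{-1}x_k\to v$, evaluates the identity $h+g=f=\bar h+\bar g$ at the points $y_k=t_kv\in C$ and then divides by $\|t_kv\|^{\alpha}$. You instead evaluate $h-\bar h$ directly at the points $x_k\in K$, use positive homogeneity to write $h(x_k)=t_k^{\alpha}h(x_k/t_k)$ (legitimate because $x_k/t_k\in C$), and pass to the limit via continuity. Your route has the advantage that the $o(\|x\|^{\alpha})$-estimate is invoked only along the sequence $x_k$, which lies in $K$ where the estimate is guaranteed by the definition of $\Oo_K^{\alpha}$; the paper's argument tacitly uses the estimate along the ray $t_kv$, which lies in $C$ but need not lie in $K$. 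Your explicit separation of the case $v=0$ is also a nice touch, since the division by $\|t_kv\|^{\alpha}$ in the paper's version would otherwise be problematic there.
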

    \begin{proof} Let $h,\bar h$ be two asymptotically homogeneous functions of degree
        $\alpha$ of $f$ relative to $K$. Assume that $f=h+g$ and $ f=\bar h+\bar g$,
        where $g, \bar g \in\Oo_K^{\alpha}$. Let $v\in K^{\infty}$ be given. There exist two sequences $\{t_k\}\subset
        \R_+$ and
        $\{x^k\}\subset K$ such that $t_k\to +\infty$ and $\lim_{k\to\infty}
        t_k^{-1}{x^k}=v$. Taking $y_k=t_kv$, one has $y_k\in K^{\infty}$ and
        \begin{equation}\label{hh}
            h(t_kv)+g(t_kv)= f(y_k)=\bar h(t_kv)+\bar g(t_kv),
        \end{equation}
        for any $k$. By dividing the equations in \eqref{hh} by $t_k^{\alpha}$ and letting
        $k\to+\infty$, we get $h(v)=\bar h(v)$. This conclusion holds for any $v\in
        K^{\infty}$; hence $h=\bar h$ on $K^{\infty}$. \qed
    \end{proof}

    According
    to Proposition \ref{prop:hh}, $\Sol(K^{\infty},h)$ does not depend on the
    choice of
    $h$ in $[f_{\alpha,K}^{\infty}]$. We can write a member of
    $[f_{\alpha,K}^{\infty}]$ simply by $f^{\infty}$ when no confusion arises.
    We
    denote  $$\K(K,f):=\Sol(K^{\infty},f^{\infty}).$$
    From Remark \ref{rmk2}, one can see that if $\K(K,f)$ is nonempty
    then $\K(K,f)$ is the set of zero points of $f^{\infty}$ in $K^{\infty}$, i.e.,
    $\K(K,f)=\{x\in K^{\infty}: f^{\infty}(x)=0\}$. So, we call $\K(K,f)$
    the \textit{kernel} of the weakly homogeneous optimization problem $\OP(K,f)$.

    \begin{proposition}\label{prop:union}
        Assume that $K$ is convex. One has the following inclusion
        \begin{equation}\label{cup}
            \bigcup_{p\in \Oo_K^{\alpha}}\Sol(K,f+p)^{\infty}\subset \K(K,f).
        \end{equation}
        Furthermore, if $K$ is a cone then the reverse inclusion of \eqref{cup} holds.
    \end{proposition}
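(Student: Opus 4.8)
The plan is to handle the two inclusions by quite different means: \eqref{cup} by a normalization argument at infinity, and the converse by exhibiting a single explicit perturbation $p$ for which $\Sol(K,f+p)$ is already the whole kernel.

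For \eqref{cup} I would fix $p\in\Oo_K^{\alpha}$ and assume $\Sol(K,f+p)\neq\emptyset$ (otherwise there is nothing to prove), so that $m:=\min_{x\in K}(f+p)(x)$ is a finite real number. Note first that $f+p$ is still weakly homogeneous of degree $\alpha$ relative to $K$ with $f^{\infty}$ among its asymptotic homogeneous functions, say $f+p=f^{\infty}+q$ with $q\in\Oo_K^{\alpha}$. The first substantive step is the auxiliary claim that $f^{\infty}\geq 0$ on $K^{\infty}$: for $w\in K^{\infty}$ and any fixed $x_0\in K$, convexity of $K$ gives $x_0+sw\in K$ for all $s\geq 0$, hence $(f+p)(x_0+sw)\geq m$; dividing by $s^{\alpha}$, using $s^{-\alpha}f^{\infty}(x_0+sw)=f^{\infty}(s^{-1}x_0+w)\to f^{\infty}(w)$ (positive homogeneity and continuity of $f^{\infty}$) together with $s^{-\alpha}q(x_0+sw)\to 0$ (because $q\in\Oo_K^{\alpha}$ and $\|x_0+sw\|\to\infty$), and letting $s\to+\infty$, one obtains $f^{\infty}(w)\geq 0$. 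Then, given $v\in(\Sol(K,f+p))^{\infty}$, monotonicity of the asymptotic cone gives $v\in K^{\infty}$; if $v=0$ we are done since $f^{\infty}(0)=0$, and if $v\neq 0$, choosing $t_k\to+\infty$ and $x_k\in\Sol(K,f+p)$ with $t_k^{-1}x_k\to v$ we get $\|x_k\|\to+\infty$ and $x_k/\|x_k\|\to v/\|v\|$, so dividing $(f+p)(x_k)=f^{\infty}(x_k)+q(x_k)$ by $\|x_k\|^{\alpha}$ yields $m/\|x_k\|^{\alpha}\to 0$ on the left and $f^{\infty}(v/\|v\|)$ on the right, whence $f^{\infty}(v)=\|v\|^{\alpha}f^{\infty}(v/\|v\|)=0$. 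Together with $v\in K^{\infty}$ and $f^{\infty}\geq 0$ on $K^{\infty}$ this gives $v\in\Sol(K^{\infty},f^{\infty})=\K(K,f)$.

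For the converse, with $K$ a closed cone (so $K^{\infty}=K$ and $\K(K,f)=\Sol(K,f^{\infty})$), I would simply take $p:=f^{\infty}-f$. This $p$ is continuous on $C$ (both $f$ and $f^{\infty}$ are) and satisfies $p(x)=-(f(x)-f^{\infty}(x))=o(\|x\|^{\alpha})$ on $K$ by Definition \ref{def:wH}, so $p\in\Oo_K^{\alpha}$; moreover $f+p=f^{\infty}$, hence $\Sol(K,f+p)=\Sol(K,f^{\infty})=\K(K,f)$. If the kernel is empty the inclusion is trivial; otherwise it is a nonempty closed cone $D$, and every nonempty closed cone satisfies $D^{\infty}=D$ (for $D\subseteq D^{\infty}$ take $x_k=kw$ with $w\in D$; for $D^{\infty}\subseteq D$ note that $x_k\in D$ forces $t_k^{-1}x_k\in D$ and use closedness), so $(\Sol(K,f+p))^{\infty}=\K(K,f)$, which yields the claimed inclusion.

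I expect the only genuinely delicate point to be the auxiliary claim $f^{\infty}\geq 0$ on $K^{\infty}$: it is the one place where convexity of $K$ is essential (to keep the half-line $x_0+sw$ inside $K$), and it is exactly what lets the $v=0$ case and the passage from $f^{\infty}(v)=0$ to the statement that $v$ minimizes $f^{\infty}$ over $K^{\infty}$ go through. All the remaining work is routine manipulation of the $o(\|\cdot\|^{\alpha})$ terms and of the identity $f^{\infty}(x)=\|x\|^{\alpha}f^{\infty}(x/\|x\|)$.
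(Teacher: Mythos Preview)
Your proof is correct and, for the converse inclusion, identical to the paper's (both take $p=f^{\infty}-f$, so that $f+p=f^{\infty}$ and $\Sol(K,f+p)=\K(K,f)=(\K(K,f))^{\infty}$). For the forward inclusion \eqref{cup} the underlying idea---a normalization argument exploiting convexity to keep the half-line $x_0+sw$ inside $K$---is the same, but the organization differs. The paper plugs the test points $y=u+t_kv$, with $v\in K^{\infty}$ arbitrary, directly into the optimality inequality $(f+p)(y)\geq (f+p)(x^k)$, divides by $t_k^{\alpha}$, and in one stroke obtains $f^{\infty}(v)\geq f^{\infty}(\bar x)$ for all $v\in K^{\infty}$, i.e.\ $\bar x\in\Sol(K^{\infty},f^{\infty})$. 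You instead split the argument: first establish $f^{\infty}\geq 0$ on $K^{\infty}$ (so $0\in\K(K,f)$), then show $f^{\infty}(v)=0$ by evaluating only at the minimizers $x_k$, and conclude via the zero-set description of $\K(K,f)$ from Remark~\ref{rmk2}. The paper's route is slightly more economical; yours has the minor advantage of making explicit, as a by-product, the nonemptiness of the kernel whenever some $\Sol(K,f+p)$ is nonempty (which is the content of Corollary~\ref{cor:empty}). One cosmetic point: in your auxiliary claim the limit $s^{-\alpha}q(x_0+sw)\to 0$ tacitly assumes $w\neq 0$; the case $w=0$ is of course trivial since $f^{\infty}(0)=0$.
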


    \begin{proof} Assume $f=f^{\infty}+g$, where $g \in\Oo_K^{\alpha}$.  Let
        $p\in\Oo_K^{\alpha}$ be given. Suppose that $\bar x\in\Sol(K,f+p)^{\infty}$. There exist two sequences $\{x^k\}\subset\Sol(K,f+p)$ and
        $\{t_k\}\subset \R_+$ such that $t_k \to+\infty$ and $t_k^{-1}{x^k}\to\bar x$. By assumptions, for each $x^k$, one has
        \begin{equation}\label{xk}
            f(y)+p(y)\geq f(x^k)+p(x^k)
        \end{equation}
        for all $y\in K$.
        Let $u\in K$ be fixed. Since $K$ is convex, $K^{\infty}$ coincides with the recession cone of $K$. For every $v\in K^{\infty}$, one
        has $u+t_kv\in K$ for any $k$. Now, \eqref{xk} implies that
        \begin{equation}\label{xk2}
            f^{\infty}(u+t_kv)+ g(u+t_kv)+p(u+t_kv)\geq f^{\infty}(x^k)+ g(x^k)+p(x^k),
        \end{equation}
        for any $k$. As $g,p\in\Oo_K^{\alpha}$, one can see that the following
        values:
        $g(u+t_kv)/t_k^{\alpha}$,  $g(x^k)/t_k^{\alpha}$, $p(u+t_kv)/t_k^{\alpha}$,
        and
        $p(x^k)/t_k^{\alpha}$ go to $0$ together as $k\to+\infty$. Therefore, by dividing the
        inequality in
        \eqref{xk2} by
        $t_k^{\alpha}$  and letting $k\to+\infty$, we obtain $f^{\infty}(v)\geq
        f^{\infty}(\bar x)$.
        The assertion holds for every $v\in K^{\infty}$. We conclude that $\bar x\in
        \K(K,f)$, hence the inclusion \eqref{cup} is proved.

        Assume that $K$ is a cone. Clearly,
        $f^{\infty}=f+(f^{\infty}-f)$ and
        $f^{\infty}-f\in \Oo_K^{\alpha}$. Hence, one has
        $\K(K,f)=\Sol(K,f^{\infty})=\Sol(K,f+(f^{\infty}-f))$. It follows
        that the reverse inclusion holds. \qed
    \end{proof}

    \begin{proposition}\label{prop:nonempty} If $f$ is bounded from below on $K$, then $\K(K,f)$ is nonempty.
    \end{proposition}
    \begin{proof} Let $v\in K^{\infty}$ be arbitrary. There exist two sequences $\{t_k\}\subset
        \R_+$ and
        $\{x^k\}\subset K$ such that $t_k\to +\infty$ and $\lim_{k\to\infty}
        t_k^{-1}{x^k}=v$. Assume that $\gamma$ is a bound from below of $f$ on $K$. One has $f(x^k)\geq \gamma$, for any $k$. By dividing the inequality by $t_k^{\alpha}$ and letting
        $k\to+\infty$, we get $f^{\infty}(v)\geq 0$. This conclusion implies $f^{\infty}$ is non-negative on $K^{\infty}$. According to Remark \ref{rmk2}, $\Sol(K^{\infty},f^{\infty})$ is nonempty. \qed
    \end{proof}

    \section{Nonemptiness and Boundedness of Solution Sets}\label{sec:comp}

In this section, we introduce two criteria for the nonemptiness and boundedness of $\Sol(K,f)$ that are considered as extended versions of the Frank-Wolfe type theorem and the Eaves type theorem for polynomial optimization problems in \cite{Hieu2018}. Their proofs are modified from \cite[Theorem 3.1]{Hieu2018} and \cite[Theorem 3.2]{Hieu2018}.

    The following theorem asserts that the nonemptiness and boundedness property holds for $\Sol(K,f)$ provided that the kernel is the trivial cone.

    \begin{theorem}\label{thm:trivial} If $\K(K,f)=\{0\}$, then $\Sol(K,f)$ is
        nonempty and bounded.
    \end{theorem}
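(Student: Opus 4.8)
The plan is to argue by contradiction using the normalization (asymptotic) technique. Suppose first that $\Sol(K,f)$ is empty, or more generally that we want to show it is nonempty and bounded. I would pick a sublevel set approach: fix any $\bar x\in K$ and consider, for each real $c\ge f(\bar x)$, the sublevel set $L_c=\{x\in K: f(x)\le c\}$, which is nonempty and closed. The claim is that some $L_c$ is bounded; then, being nonempty, closed, and bounded, it is compact, so $f$ attains its minimum over $L_c$ (hence over $K$), which gives nonemptiness of $\Sol(K,f)$, and boundedness of $\Sol(K,f)$ follows since $\Sol(K,f)\subset L_c$ once $c$ is the optimal value.

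So the heart of the matter is: if $\K(K,f)=\{0\}$, then $L_c$ is bounded for some (equivalently all sufficiently large) $c$. I would prove this by contradiction: suppose every $L_c$ is unbounded. Then for each $k$ choose $x^k\in K$ with $f(x^k)\le f(\bar x)+1$ (say, working in a fixed sublevel set) and $\|x^k\|\to+\infty$; set $t_k=\|x^k\|$ and pass to a subsequence so that $x^k/t_k\to v$ with $\|v\|=1$. By the definition of the asymptotic cone, $v\in K^{\infty}$, and $v\neq 0$. Now write $f=f^{\infty}+g$ with $g\in\Oo_K^{\alpha}$. Dividing $f(x^k)=f^{\infty}(x^k)+g(x^k)$ by $t_k^{\alpha}$ and using positive homogeneity of $f^{\infty}$ together with $g(x^k)=o(\|x^k\|^{\alpha})$ on $K$, I get
\begin{equation*}
\frac{f(x^k)}{t_k^{\alpha}}=f^{\infty}\!\left(\frac{x^k}{t_k}\right)+\frac{g(x^k)}{t_k^{\alpha}}\longrightarrow f^{\infty}(v).
\end{equation*}
Since $f(x^k)$ is bounded above (by $f(\bar x)+1$) and $t_k^{\alpha}\to+\infty$, the left side has limit $\le 0$, so $f^{\infty}(v)\le 0$. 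I then need the reverse inequality $f^{\infty}(v)\ge 0$: this should come from the fact that $f^{\infty}$ is nonnegative on $K^{\infty}$. Indeed, by Remark \ref{rmk2} applied to the positively homogeneous function $f^{\infty}$ on the cone $K^{\infty}$, the nonemptiness of $\K(K,f)=\Sol(K^{\infty},f^{\infty})$ — which holds because $0\in\K(K,f)$ by hypothesis, the kernel being the trivial cone rather than empty — is equivalent to $f^{\infty}\ge 0$ on $K^{\infty}$. Hence $f^{\infty}(v)=0$, i.e. $v\in\K(K,f)$, contradicting $v\neq 0$ and $\K(K,f)=\{0\}$.

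The main obstacle I anticipate is making the bound on $f(x^k)$ legitimate, i.e. setting up the argument so that the chosen divergent sequence really does lie in a fixed sublevel set. If $\Sol(K,f)$ were simply assumed empty one has to be a little careful, since $\inf_K f$ could a priori be $-\infty$; but this is handled by the same normalization argument: if $\inf_K f=-\infty$, pick $x^k\in K$ with $f(x^k)\to-\infty$, necessarily $\|x^k\|\to+\infty$ (else a convergent subsequence contradicts continuity), and the same computation yields $f^{\infty}(v)\le 0$ with $v\in K^{\infty}\setminus\{0\}$, again contradicting triviality of the kernel; so $\inf_K f$ is finite, and then the argument with a minimizing value of $c$ closes the proof. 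A secondary subtlety is that the reverse inequality $f^{\infty}\ge 0$ on $K^{\infty}$ relies on $\K(K,f)$ being nonempty (not merely trivial); since $\{0\}\neq\varnothing$ this is automatic here, but it is worth stating explicitly. Everything else — closedness of sublevel sets, compactness implying attainment, $\Sol(K,f)\subset L_c$ — is routine.
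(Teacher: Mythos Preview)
Your proof is correct and follows essentially the same approach as the paper: fix a point of $K$, show the corresponding sublevel set is bounded via the normalization argument (an unbounded sequence would yield a nonzero $v\in K^{\infty}$ with $f^{\infty}(v)\le 0$, hence $v\in\K(K,f)$, a contradiction), and conclude by Weierstrass. The paper is terser in jumping from $f^{\infty}(v)\le 0$ to $v\in\K(K,f)$, whereas you spell out explicitly via Remark~\ref{rmk2} that the nonemptiness of $\K(K,f)=\{0\}$ forces $f^{\infty}\ge 0$ on $K^{\infty}$; your extra discussion of the case $\inf_K f=-\infty$ is not needed once you work directly with the fixed sublevel set $L_{f(\bar x)}$, but it does no harm.
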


    \begin{proof} Suppose that $\K(K,f)=\{0\}$, i.e., $f^{\infty}(v)> 0$ for all $v\in K^{\infty}$ and $v\neq 0$. Take $x^0\in K$ and define
        $M=\{x\in K: f(x^0)\geq f(x)\}$.
        Clearly, $M$ is nonempty and closed.  Besides, it is not difficult to check that
        $\Sol(K,f)=\Sol(M,f)$.

        We claim that $M$ is bounded. Indeed, on the contrary,
        we suppose that $M$ is unbounded. Then there exists a sequence $\{x^k\}\subset
        M$ such that  $\|x^k\|\to +\infty$. Assume, without loss of generality, that $\|x^k\|^{-1}x^k\to v$ and $ v\in K^{\infty}\setminus\{0\}$. For each $k$,
        we have $f(x^0)\geq f(x^k)$. Dividing both sides of this inequality by
        $\|x^k\|^{\alpha}$	and letting $k\to +\infty$, we obtain $0\geq
        f^{\infty}(v)$. This is impossible because of
        $v\neq 0$ and $f^{\infty}(v)> 0$.  Hence, $M$ is bounded.

        Since $M$ is compact, the desired result follows from the
        Bolzano-Weierstrass theorem.	\qed
    \end{proof}

    The next example illustrates Theorem \ref{thm:trivial}.

    \begin{example}
        Consider the weakly homogeneous optimization problem given by $f$ and  $K$ as in  Example~\ref{ex:1}. One can see that $\K(K,f)=\{0\}$. According to Theorem \ref{thm:trivial}, the solution set is nonempty and bounded. Meanwhile, we can see that $\Sol(K,f)=\{(1,0)\}$.
    \end{example}

    \begin{remark}\label{rmk:strong} Our new notation (weakly homogeneous function) is stronger than that of \cite{GoSo18} in sense that our
        kernel is a subset of $\Sol(K^{\infty},\hat h)$, where $\hat h$ is the
        asymptotically homogeneous function of $f$ defined as in \cite{GoSo18}. In Example~\ref{ex:1},
        the asymptotic function in the sense of \cite{GoSo18} is $\hat
        h(x_1,x_2)=x_1x_2$; hence, one has $\Sol(K^{\infty},\hat h)=\{(x_1,0):x_1\geq
        0\}\supset \{0\} = \K(K,f)$.
    \end{remark}

 The following theorem considers the case that $\K(K,f)$ is non-trivial (i.e., unbounded) and the objective function $f$ is pseudoconvex. A relationship between $\K(K,f)$ and $\nabla f(K)$ enables us to see
    the nonemptiness and boundedness of $\Sol(K,f)$.

    \begin{theorem}\label{pseudo} Assume that $K$ is convex and $\K(K,f)$ is non-trivial. If $f$ is pseudoconvex on an open set $U$ containing $K$, then the following statements are equivalent:
        \begin{description}
            \item[\rm(a)] For each $v\in\K(K,f)\setminus\{0\}$, there exists $x\in K$ such that $\left\langle \nabla f(x), v \right\rangle > 0$;
            \item[\rm(b)] $\Sol(K,f)$ is nonempty and bounded.
        \end{description}
    \end{theorem}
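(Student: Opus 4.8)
The plan is to establish the two implications separately, using the Minty-type variational characterisation of Remark~\ref{rmk:Minty} for $(b)\Rightarrow(a)$ and a normalisation argument on the truncated problems $\OP(K\cap r\Bo,f)$ for $(a)\Rightarrow(b)$. For $(b)\Rightarrow(a)$, suppose $\Sol(K,f)$ is nonempty and compact and, for contradiction, that some $v\in\K(K,f)\setminus\{0\}$ satisfies $\langle\nabla f(x),v\rangle\le0$ for every $x\in K$. Fix $x^0\in\Sol(K,f)$. Since $v\in\K(K,f)\subset K^{\infty}$ and $K$ is convex, $x^0+tv\in K$ for all $t\ge0$, and by Remark~\ref{rmk:Minty} we have $\langle\nabla f(x),x-x^0\rangle\ge0$ for all $x\in K$; taking $x=x^0+tv$ and dividing by $t>0$ gives $\langle\nabla f(x^0+tv),v\rangle\ge0$, hence $\langle\nabla f(x^0+tv),v\rangle=0$ for all $t\ge0$. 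Thus $t\mapsto f(x^0+tv)$ has vanishing derivative, so $f(x^0+tv)=f(x^0)=\min_{K}f$ for all $t\ge0$; the whole ray $\{x^0+tv:t\ge0\}$ then lies in $\Sol(K,f)$, contradicting boundedness. Hence $(a)$ holds.

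For $(a)\Rightarrow(b)$, fix $x^0\in K$ and, for $r\ge\|x^0\|$, put $K_r:=K\cap r\Bo$; this set is nonempty and compact, so by Weierstrass $\OP(K_r,f)$ has a solution $x^r$, and $f(x^r)\le f(x^0)$. Suppose first that the family $\{x^r\}$ is bounded, and pick $r$ large enough that $\|x^r\|<r$. For each $y\in K$ the point $x^r+s(y-x^r)$ lies in $K_r$ for all small $s>0$ (convexity of $K$ and $\|x^r\|<r$), so $f$ does not decrease from $x^r$ along $y-x^r$, giving $\langle\nabla f(x^r),y-x^r\rangle\ge0$; by Remark~\ref{rmk:Minty} this yields $x^r\in\Sol(K,f)$, so $\Sol(K,f)\ne\emptyset$. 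Moreover $\Sol(K,f)$ is closed, and it is bounded: if $w^k\in\Sol(K,f)$ with $\|w^k\|\to\infty$, we may assume $\|w^k\|^{-1}w^k\to v\in K^{\infty}\setminus\{0\}$; dividing $f(w^k)=\min_{K}f$ by $\|w^k\|^{\alpha}$ and letting $k\to\infty$ gives $f^{\infty}(v)=0$, so $v\in\K(K,f)\setminus\{0\}$, and $(a)$ provides $\bar x\in K$ with $\langle\nabla f(\bar x),v\rangle>0$. Then $\langle\nabla f(\bar x),w^k-\bar x\rangle=\|w^k\|\,\langle\nabla f(\bar x),\|w^k\|^{-1}(w^k-\bar x)\rangle\to+\infty$, so for large $k$ the strict form of pseudoconvexity $\bigl(\langle\nabla f(\bar x),y-\bar x\rangle>0\Rightarrow f(y)>f(\bar x)\bigr)$ gives $\min_{K}f=f(w^k)>f(\bar x)\ge\min_{K}f$, a contradiction. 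Hence $\Sol(K,f)$ is compact.

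It remains to exclude the case that $\{x^r\}$ is unbounded. Then there are $r_k\to\infty$ with $\|x^{r_k}\|\to\infty$, and after passing to a subsequence $\|x^{r_k}\|^{-1}x^{r_k}\to v\in K^{\infty}\setminus\{0\}$; dividing $f(x^{r_k})\le f(x^0)$ by $\|x^{r_k}\|^{\alpha}$ yields $f^{\infty}(v)\le0$, and since $f^{\infty}$ is nonnegative on $K^{\infty}$ (which holds as soon as $\K(K,f)$ is nonempty, by Remark~\ref{rmk2}) we get $f^{\infty}(v)=0$, i.e.\ $v\in\K(K,f)\setminus\{0\}$. By $(a)$ choose $\bar x\in K$ with $\langle\nabla f(\bar x),v\rangle>0$; for $k$ large, $\|\bar x\|\le r_k$, so $\bar x\in K_{r_k}$ and therefore $f(x^{r_k})\le f(\bar x)$, whereas $\langle\nabla f(\bar x),x^{r_k}-\bar x\rangle\to+\infty$ forces $f(x^{r_k})>f(\bar x)$ for large $k$ by strict pseudoconvexity --- a contradiction. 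So $\{x^r\}$ is bounded, and the previous paragraph applies.

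I expect the $(a)\Rightarrow(b)$ direction to be the crux, and within it two points deserve care. First, the normalisation only delivers $f^{\infty}(v)\le0$, so to place $v$ inside the kernel $\K(K,f)$ and invoke $(a)$ one needs $f^{\infty}\ge0$ on $K^{\infty}$, which is exactly the situation of a non-trivial (in particular nonempty) kernel emphasised before the theorem. Second, the argument genuinely uses the strict form of pseudoconvexity $\langle\nabla f(\bar x),y-\bar x\rangle>0\Rightarrow f(y)>f(\bar x)$ (derivable by restricting $f$ to the segment $[\bar x,y]$, on which it is pseudoconvex, hence unimodal with vanishing derivative only at global minimisers): it is what turns ``$x^{r}$ minimises $f$ over $K_r\ni\bar x$'' into a genuine contradiction. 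The remaining ingredients --- reduction to truncated problems, the normalisation limits, Weierstrass' theorem, and closedness of the solution set --- are routine.
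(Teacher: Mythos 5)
Your proof is correct, and in the crucial direction $\rm(a)\Rightarrow\rm(b)$ it takes a genuinely different route from the paper's. Both proofs truncate to $K\cap\overline\Bo(0,r)$ and normalize $f(x^{r})\le f(x^{0})$ to place the limit direction $v$ in $\K(K,f)\setminus\{0\}$ (like you, the paper needs $f^{\infty}\ge 0$ on $K^{\infty}$, i.e.\ a nonempty kernel, to make that step legitimate --- you are actually more explicit about this than the paper is). But to turn $\rm(a)$ into a contradiction the paper normalizes the Minty inequality $\langle\nabla f(x),x-x^{k}\rangle\ge 0$ (from Remark \ref{rmk:Minty}) by $\|x^{k}\|$, obtaining $\langle\nabla f(x),v\rangle\le 0$ for all $x\in K$, which directly negates $\rm(a)$; it uses the same device to bound $\Sol(K,f)$. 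You instead invoke the strict implication $\langle\nabla f(\bar x),y-\bar x\rangle>0\Rightarrow f(y)>f(\bar x)$ and contradict $f(x^{r_k})\le f(\bar x)$. That implication is not the stated definition of pseudoconvexity and is only sketched in your write-up, but it is true and your sketch is salvageable: the restriction $g(s)=f(\bar x+s(y-\bar x))$ is pseudoconvex in one variable, and if $g'(0)>0$ yet $g(1)=g(0)$, then picking $s_{0}$ small with $g(s_{0})>g(0)=g(1)$ and applying the contrapositive of pseudoconvexity at $s_{0}$ toward $0$ and toward $1$ forces $g'(s_{0})>0$ and $g'(s_{0})<0$ simultaneously. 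So your approach trades the paper's second normalization for this auxiliary one-dimensional lemma; the paper's version is self-contained given Remark \ref{rmk:Minty}, while yours localizes the contradiction at a single point $\bar x$ and also yields a slightly cleaner nonemptiness step (an exact minimizer $x^{r}$ with $\|x^{r}\|<r$ rather than a limit of approximate ones). Your $\rm(b)\Rightarrow\rm(a)$ argument is essentially the paper's, reaching the same unbounded ray of solutions via the observation that $\langle\nabla f(x^{0}+tv),v\rangle=0$ along the ray.
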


    \begin{proof} Suppose that $f$ is pseudoconvex on an open set
        $U$ containing $K$.

        $\rm(a) \Rightarrow \rm(b)$ Suppose that $\rm(a)$ holds.  For each
        $k=1,2,\dots$, we denote $K_k=K\cap \overline\Bo(0,k)$, where
        $\overline\Bo(0,k)$ is the closed ball of radius $k$
        centered at the origin. Clearly, $K_k$ is compact and convex. We can assume that $K_k$ is
        nonempty. From the Bolzano-Weierstrass theorem, $\OP(K_k,f)$ has a solution, say $x^k$.

        We claim that the sequence $\{x^k\}$ is bounded. On the contrary, suppose that this sequence is unbounded and that $x^k\neq 0$ for all $k$, 	$\|x^k\|^{-1}x^k\to v$, where $ v \in K^{\infty}$ with $\| v\|=1$.
        For each $k$, we have
        \begin{equation}\label{f_Kk}
            f(x) \geq f(x^k), \ \forall x\in K_k.
        \end{equation}
        By fixing $x\in K_1$, hence $x\in K_{k}$ for any $k$,  dividing both sides of the inequality in \eqref{f_Kk} by $\|x^k\|^{\alpha}$, and letting $k\to+\infty$, we get
        $0\geq f^{\infty}(v)$. This leads to $ v\in\K(K,f)\setminus\{0\}$. For each
        $k$, since $x^k\in\Sol(K_k,f)$ and $f$ is pseudoconvex on the convex set $K_k$, from Remark
        \ref{rmk:Minty}, we have
        \begin{equation}\label{nab_f}
            \langle \nabla f(x),x-x^k\rangle\geq 0, \ \forall x\in K_k.
        \end{equation}
        Take an arbitrary $x\in K$. For $k$ large enough, $x$ belongs to $K_k$.
        Dividing both
        sides of the inequality in \eqref{nab_f} by $\|x^k\|$	and letting $k\to +\infty$, we obtain
        $0\geq \langle \nabla f(x),v\rangle$. This
        contradicts $\rm(a)$; hence $\{x^k\}$ is bounded.

        We can assume that  $x^k\to \bar x$. From \eqref{f_Kk}, by the continuity of $f$, we see that  $\bar x$ solves $\OP(K,f)$, so $\Sol(K,f)$ is nonempty.

        Let us give a brief proof for the boundedness of $\Sol(K,f)$. We suppose on the contrary that there is an unbounded solution sequence $\{x^k\}$, $\|x^k\|^{-1}x^k\to v$, where $ v \in K^{\infty}$ and $\| v\|=1$. The inequalities in \eqref{f_Kk} and \eqref{nab_f} hold for any $x\in K_k$. By repeating the previous arguments, we can get a conclusion which contradicts $\rm(a)$. Hence, the first assertion is proved.

        $\rm(b) \Rightarrow \rm(a)$ Since $K$ is convex, one has $K=K+K^{\infty}$.
        Suppose that $\Sol(K,f)$ is nonempty and bounded, but on the contrary there exists  $v\in\K(K,f)\setminus\{0\}$ such that $\langle \nabla f(x),v\rangle\leq 0$
        for all $x\in K$. Let $\bar x$ be a solution of $\OP(K,f)$. For any
        $t\geq 0$, one has $\bar x+tv\in K$ and
        $\langle \nabla f(\bar x+tv),v\rangle\leq 0$. This implies that
        $$\langle \nabla f(\bar x+tv),\bar x-(\bar x+tv)\rangle\geq 0.$$
        The pseudoconvexity of $f$ yields  $f(\bar x)\geq f(\bar x+tv)$. Thus, $\bar x+tv\in\Sol(K,f)$ for any $t\geq 0$. It follows that $\Sol(K,f)$ is unbounded which contradicts our assumption. Hence, $\rm (a)$ holds. \qed
    \end{proof}

    To illustrate Theorem \ref{pseudo}, we provide the following example.

    \begin{example}\label{ex:2}
        Consider the objective function $f(x_1,x_2):=\frac{1}{2}x_1^2-x_1x_2+x_2^{3}$ and the constraint set $K:=\{(x_1,x_2)\in\R^2: x_1x_2\geq 1, x_2\geq 2\}\subset \R^2_{+}$. Clearly,  $U:=\{(x_1,x_2)\in\R^2: x_1> 0, x_2 > 1\}$ is an open set containing $K$. The gradient and the Hessian matrix of $f$ on $U$ are  determined respectively by
        \begin{equation}\label{eq:examp}
            \nabla f= \begin{bmatrix}
                x_1-x_2 \\
                -x_1+3x_2^{2}
            \end{bmatrix}, \ \ H_f=\begin{bmatrix}
                1& -1 \\
                -1 \ & \ 6x_2
            \end{bmatrix}.
        \end{equation}
        It is easy to check that $K$ is convex and that $H_f$ is positive semidefinite on the open set
        $U$; hence $f$ is convex on $K$. One has $K^{\infty}=\R_{+}^2$ and
        $f^{\infty}(x_1,x_2)=x_2^{3}$ with $\alpha=3$. This yields
        \begin{equation}\label{exK}
            \K(K,f)=\{(x_1,x_2)\in\R^2: x_1\geq 0,x_2=0\}.
        \end{equation}
        For any $v$ in  $\K(K,f)\setminus\{0\}$, $v=(\beta,0)$, where $\beta > 0$, by choosing $\bar x=(3,2)\in K$, we get $\left\langle \nabla f(\bar x), v \right\rangle=\beta > 0$. According to Theorem~\ref{pseudo}, the solution set of $\OP(K,f)$ is nonempty and bounded.
    \end{example}

    \section{Linearly Parametric Optimization Problems}\label{sec:linear}

    In the sequel, we assume that
    $\alpha>1$. This section investigates the nonemptiness and boundedness of the solution sets of \textit{linearly parametric}  optimization problem
    $\OP(K,f_u)$, where $u\in\R^n$ and
    $$f_u(x):=f(x)+\left\langle u,x
    \right\rangle.$$

\begin{theorem}\label{thm:3}
        Assume that $f$ is bounded from below on $K$ and $\inte(\K(K,f)^*)$ is nonempty. If $u\in \inte(\K(K,f)^*)$, then $\Sol(K,f_u)$ is nonempty and bounded.
    \end{theorem}

    \begin{proof} Let $\gamma$ be a lower bound of $f$ on $K$. Suppose that
        $u$ is an element of $\inte(\K(K,f)^*)$. We now prove the nonemptiness of
        $\Sol(K,f_u)$.  For each $k=1,2,\dots$, we define $K_k=K\cap \overline\Bo(0,k)$. Clearly, $K_k$ is compact. We can
        assume that $K_k$ is nonempty. By the Bolzano-Weierstrass theorem,
        $\OP(K_k,f_u)$ has a solution, say $x^k$. One has
        \begin{equation}\label{op}
            f(y)+\left\langle u,y \right\rangle \geq f(x^k)+\left\langle u,x^k \right\rangle
        \end{equation}
        for all $y\in K_k$.

        One claims that $\{x^k\}$ is a bounded sequence. Indeed, on the contrary, suppose that the sequence is unbounded. We can assume that  $x^k\neq 0$ for all $k$,
        $\|x^k\|^{-1}x^k\to\bar x$ with $\bar x \in K^{\infty}$ and $\|\bar x\|=1$.
        Let $y$ in $K_1$ be fixed. By dividing the inequality \eqref{op} by $\|x^k\|^{\alpha}$ and letting $k\to+\infty$, since $\alpha>1$, we obtain $0\geq f^{\infty}(\bar x)$. By the boundedness from below of $f$ on $K$, one can see that $f^{\infty}(\bar x) \geq 0$. It follows that $f^{\infty}(\bar x)= 0$; hence  $\bar x \in \K(K,f)$. From \eqref{op}, we can see that
        $$f(y)+\left\langle u,y \right\rangle \geq \gamma+\left\langle u,x^k \right\rangle.$$
        This leads to $0 \geq \left\langle u,\bar x\right\rangle$ which contradicts our assumption $u\in\inte(\K(K,f)^*)$. Thus, the sequence
        $\{x^k\}$ is bounded.

        We can suppose that  $x^k\to z$. Let $y\in K$ be given, then $y\in K_k$ for $k$
        large enough and \eqref{op} holds. By the continuity of $f$ and \eqref{op}, we have $	f(y)+\left\langle u,y \right\rangle  \geq f(z)+\left\langle u,z
        \right\rangle$. This conclusion holds for all $y\in K$; hence $z$ solves
        $\OP(K,f_u)$, and the nonemptiness of $\Sol(K,f_u)$ follows.

To prove the boundedness of $\Sol(K,f_u)$, on the contrary, assume that there exists an unbounded sequence  $\{x^k\}\subset\Sol(K,f_u)$,  $x^k\neq 0$ for all $k$, and
        $\|x^k\|^{-1}x^k\to\bar x$.
        Repeating the previous arguments for $\{x^k\}$, we also obtain the facts that
        $\bar x \in \K(K,f)$ and $0\geq \left\langle u,\bar x \right\rangle$. This implies $u\notin \inte(\K(K,f)^*)$ which contradicts
        our assumption. 
        \qed
    \end{proof}

We denote by $\Rc(K,f)$ the set of all $u\in \R^n$ such that $\OP(K,f_u)$ has a solution, i.e.,
$$\Rc(K,f):=\{u\in \R^n: \Sol(K,f_u) \neq \emptyset\}.$$

    \begin{corollary}
        Assume that $K$ is a pointed cone and $f$ is bounded from below on $K$. Then, $\Rc(K,f)$ is nonempty.
    \end{corollary}
    \begin{proof} By Theorem \ref{thm:3}, one has $\inte(\K(K,f)^*)\subset \Rc(K,f)$.
        Because $K$ is a cone, we have $\K(K,f)\subset K$ and $K^*\subset \K(K,f)^*$.
        Since the cone $K$ is pointed, $K^*$ has a nonempty interior; hence $\inte(\K(K,f)^*)$ is nonempty. This implies the nonemptiness of $\Rc(K,f)$. \qed
    \end{proof}

    \begin{example}\label{ex:4}
        Let us consider the polynomial optimization problem given by
        $f:=x_2(x_1x_2-1)$ and $K:=\{(x_1,x_2)\in \R^2: x_1x_2\geq 1,  x_1 -x_2 \geq 0,x_2\geq 0 \}$.  It is clear that $f$ is bounded from below on $K$. One has $f^{\infty}(x_1,x_2)=x_1x_2^{2}$ with $\alpha=3$ and $K^{\infty}=\{(x_1,x_2)\in \R^2:x_1 -x_2 \geq 0,x_2\geq 0 \}$. Therefore,
        $\K(K,f)=\{(x_1,x_2)\in\R^2: x_1\geq 0,x_2=0\}$. The interior of the dual cone of the kernel is determined by
        $$\inte(\K(K,f)^*)=\{(u_1,u_2)\in\R^2: u_1 > 0\}.$$
        According to Theorem \ref{thm:3}, the solution set of $\OP(K,f_u)$ is nonempty and bounded for any $u=(u_1,u_2)$ with $u_1> 0$. To see the (un)boundedness of $\OP(K,f_u)$ when $u\notin \inte(\K(K,f)^*)$, we consider the two following cases where $u$ belongs to the boundary of $\K(K,f)^*$ as follows:
        \begin{itemize}
            \item First, we take $u=(0,1)$, then $f_u=x_1x_2^2$. It is not difficult to show that $f_u$ is coercive  on $K$, i.e., $f_u(x^k)\to +\infty$ as $\| x^k\|\to +\infty$. Hence, the solution set of $\OP(K,f_u)$ is nonempty and bounded. 

            \item Second, we choose $u=(0,0)$, then $f_u=x_2(x_1x_2-1)$. This function is non-negative on $K$ and vanishes on $S:=\{(x_1,x_2)\in \R^2: x_1x_2=1,  x_1 \geq 1\}$. Hence, $S$ is the solution set of $\OP(K,f_u)$ that is unbounded.
        \end{itemize}

    \end{example}


Suppose that $f$ is differentiable on an open set $U$ containing  $K$, we denote
$$\D(K,f):= (K^{\infty})^*-\nabla f(K).$$

\begin{remark}\label{rmk:DKf}
It is not difficult to see that $\K(K,f)=\K(K,f_u)$, for any $u\in \R^n$.
Furthermore, if $\K(K,f_u)$ is nonempty, then $\D(K,f)\subset \K(K,f_u)^*-\nabla f(K)$ because of $\K(K,f_u)\subset K^{\infty}$.
\end{remark}

The next theorem provides another sufficient condition concerning $\D(K,f)$ for the nonemptiness and boundedness of solutions to linearly parametric optimization problems.

\begin{theorem}\label{thm:4} Let $K$ be a convex set. Assume that $f$ is differentiable, convex, and bounded from below on $K$. If $u\in\inte(\D(K,f))\neq \emptyset$, then $\Sol(K,f_u)$ is nonempty and bounded.
    \end{theorem}
    \begin{proof} Because $f$ is bounded from below on $K$, from Proposition \ref{prop:nonempty}, $\K(K,f)$ is nonempty.
Suppose that $u$ belongs to $\inte(\D(K,f))\neq \emptyset$. It follows from Remark \ref{rmk:DKf} that
$u\in \inte(\K(K,f_u)^*-\nabla f(K))$.
Applying Lemma \ref{lm:conc} to the case $V=\K(K,f_u)$, we conclude that, for each $v\in \K(K,f_u)\setminus\{0\}$, there is $x\in K$ such that $\left\langle \nabla f(x)+u,v \right\rangle> 0$. Note that $f+\left\langle u,\cdot \right\rangle$ is convex on $K$.

Because $\K(K,f)$ is nonempty, we have the following two cases:
If $\K(K,f)$ is the trivial cone, then the nonemptiness and boundedness of $\Sol(K,f_u)$ follows Theorem \ref{thm:trivial}. If $\K(K,f)$ is non-trivial then, by applying Theorem \ref{pseudo} to the convex problem $\OP(K,f_u)$, one asserts that  $\Sol(K,f_u)$ is nonempty and bounded.  \qed
    \end{proof}

For  unconstrained optimization problems, we have the following property:

    \begin{corollary}\label{cor:2} Assume that $f$ is differentiable, convex on $\R^n$, and $\inte(\nabla f(\R^n))$ is nonempty. If $u\in-\inte(\nabla f(\R^n))$, then $\Sol(\R^n,f_u)$ is nonempty and bounded.
    \end{corollary}
    \begin{proof}The constraint set in the problem is $K=\R^n$. Because of $(K^{\infty})^*=\{0\}$, one has $\D(\R^n,f)=-\nabla f(\R^n)$. The conclusion in the corollary is straightforward from Theorem \ref{thm:4}.
    \end{proof}

    Consider a quadratic optimization problem $\OP(A,b,D,c)$ given by
    $f(x):=\frac{1}{2}x^TDx+c^Tx$, where $D\in \R^{n\times n}$ is symmetric, $c\in \R^n$, and  $K :=\{x\in\R^n:Ax\leq b\}$, where $A\in \R^{m\times n}$, $b\in \R^m$. We denote by $\Sol(A,b,D,c)$ the solution set of the problem. Assume that $K$ is unbounded and $h(x):=\frac{1}{2}x^TDx$ is an asymptotically homogeneous function of degree $\alpha=2$ of $f$ relative to $K$. Clearly, one has $\nabla f(x)=Dx$ and $K^{\infty} =\{x\in\R^n:Ax\leq 0\}$.

The following corollary follows directly from Theorem \ref{thm:4}.

    \begin{corollary}\label{cor:3} Asumme that $D$ is positive semidefinite on $K$. 
Then, the  set $\Sol(A,b,D,c+u)$ is nonempty and bounded if $u\in\inte((K^{\infty})^*-D(K))\neq\emptyset$.
    \end{corollary}

    To illustrate Corollary \ref{cor:3}, we consider the following example.

    \begin{example}\label{ex:6}
        Consider the convex quadratic optimization problem given by  $f(x_1,x_2):=(x_1-2x_2+1)^2$ and $K:=\{(x_1,x_2)\in \R^2: 0\leq x_1\leq 1\}$. We have $(K^{\infty})^*=\{(0,\mu): \mu\in \R\}$ and $\nabla f(K)= \{\gamma(1,-2): \gamma\in \R\}$. Besides, $\D(K,f) =\{(-\gamma,\mu+2\gamma): \mu, \gamma\in \R\}=\R^2$. From Theorem \ref{thm:4},  $\Sol(K,f_u)$ is nonempty and bounded, for any $u$ in $\R^2$.
    \end{example}

    \section{Upper Semicontinuity of the Solution Map}\label{sec:usc}

    This section is devoted to investigate the upper semicontinuity of the solution map $$\Sa:\R^{n}\rightrightarrows \R^n, \ \Sa(u)=\Sol(K,f_u).$$
    The two sets $\K(K,f)$ and $\D(K,f)$ play vital roles in this investigation.

    Recall that a set-valued map $\Phi:\R^m\rightrightarrows\R^n$ is \textit{locally bounded} at $\bar x$ if there exists an open neighborhood $U$ of $\bar x$ such that $\cup_{x\in U} \Phi(x)$ is bounded. 
    The  map $\Phi$ is  \textit{upper semicontinuous} at $x\in T$, where $T$ is a subset of $\R^n$, if and only if for any open set $V\subset \R^n$ such that $\Phi(x)\subset V$ there exists a neighborhood $U$ of $x$ such that $\Phi(x')\subset V$ for all $x'\in U$. If $\Phi$ is upper semicontinuous at every $x\in T\subset \R^m$, then $\Phi$ is said that to be upper semicontinuous on $T$.

    \begin{remark}\label{local}
        If the set-valued map $\Phi$ is closed, i.e., its graph
        $$\gph(\Phi):=\big\{(u,v)\in \R^m\times \R^n: v\in \Phi(u)\big\}$$
        is closed in $\R^m\times \R^n$, and locally bounded at $x$, then $\Phi$ is upper semicontinuous at $x$ \cite[Theorem 5.19]{RW}.
    \end{remark}

    \begin{remark}\label{graph} The solution map $\Sa$ is closed. Indeed, we take a sequence $\{(u^k,x^k)\}$ in $\gph(\Sa)$ with
        $(u^k,x^k)\to (\bar u,\bar x).$
        It follows that $u^k\to\bar  u$ and $x^k\to \bar x$. Let $y\in K$ be arbitrary. By definition, one has
        $f(y)+ \left\langle u^k,y\right\rangle \geq f(x^k) +\left\langle u^k,x^k\right\rangle$ for any $k$.
        Taking $k\to+\infty$, by the continuity of $f$, we get $f(y)+ \left\langle \bar u,y\right\rangle \geq f(\bar x) +\left\langle \bar u,\bar x\right\rangle$. Thus, $\bar x\in \Sol(K,f_{\bar u})$; hence $\Sa$ is closed.
    \end{remark}

    \begin{theorem}\label{usc_int}
        Assume that $f$ is bounded from below on $K$. Then, $\Sa$ is upper
        semicontinuous on $\inte(\K(K,f)^*)$.
    \end{theorem}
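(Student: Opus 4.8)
The plan is to follow the scheme of Theorem~\ref{usc_1}. Since $\Sa$ is closed (Remark~\ref{graph}), by Remark~\ref{local} it suffices to prove that $\Sa$ is locally bounded at every $\bar u\in\inte\K(K,f)^*$. So I would fix such a $\bar u$ and, using that $\inte\K(K,f)^*$ is open, choose $\varepsilon>0$ with $\overline\Bo(\bar u,\varepsilon)\subset\inte\K(K,f)^*$. Proceeding as in \eqref{M_eps}, since $M_\varepsilon\subset N_\varepsilon:=\bigcup_{u\in\overline\Bo(\bar u,\varepsilon)}\Sa(u)$, it is enough to show that $N_\varepsilon$ is bounded.

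Arguing by contradiction, suppose $N_\varepsilon$ is unbounded. Then there are $u^k\in\overline\Bo(\bar u,\varepsilon)$ and $x^k\in\Sa(u^k)$ with $x^k\neq0$ and $\|x^k\|\to+\infty$; passing to subsequences I may assume $\|x^k\|^{-1}x^k\to\bar x$ with $\|\bar x\|=1$ and, by compactness of $\overline\Bo(\bar u,\varepsilon)$, that $u^k\to u^*\in\overline\Bo(\bar u,\varepsilon)\subset\inte\K(K,f)^*$. For each $k$ one has
\begin{equation*}
f(y)-\langle u^k,y\rangle\ \geq\ f(x^k)-\langle u^k,x^k\rangle,\qquad\forall y\in K.
\end{equation*}
Fix $y\in K$ and write $f=f^\infty+g$ with $g\in\Oo_K^\alpha$. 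Dividing the inequality by $\|x^k\|^\alpha$ and letting $k\to\infty$: because $\alpha>1$ the two linear terms satisfy $|\langle u^k,x^k\rangle|/\|x^k\|^\alpha\le\|u^k\|/\|x^k\|^{\alpha-1}\to0$ and $\langle u^k,y\rangle/\|x^k\|^\alpha\to0$, while $g(x^k)/\|x^k\|^\alpha\to0$ and $f^\infty(x^k)/\|x^k\|^\alpha=f^\infty(\|x^k\|^{-1}x^k)\to f^\infty(\bar x)$; this yields $0\geq f^\infty(\bar x)$. On the other hand, the lower bound $f\geq\gamma$ on $K$ gives $f(x^k)/\|x^k\|^\alpha\geq\gamma/\|x^k\|^\alpha\to0$, hence $f^\infty(\bar x)\geq0$. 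Therefore $f^\infty(\bar x)=0$, i.e. $\bar x\in\K(K,f)\setminus\{0\}$.

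Next I would exploit the same optimality inequality with the other normalization. Rewriting it as $\langle u^k,x^k\rangle\geq f(x^k)-f(y)+\langle u^k,y\rangle\geq\gamma-f(y)+\langle u^k,y\rangle$, dividing by $\|x^k\|$ and letting $k\to\infty$ — the right-hand side tends to $0$ since $u^k\to u^*$ and $y$ is fixed, while $\langle u^k,x^k\rangle/\|x^k\|=\langle u^k,\|x^k\|^{-1}x^k\rangle\to\langle u^*,\bar x\rangle$ — gives $\langle u^*,\bar x\rangle\geq0$. But $u^*\in\inte\K(K,f)^*$ and $\bar x\in\K(K,f)\setminus\{0\}$ force $\langle u^*,\bar x\rangle<0$, a contradiction. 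Hence $N_\varepsilon$ is bounded, so $\Sa$ is locally bounded at $\bar u$ and therefore, being closed, upper semicontinuous at $\bar u$; since $\bar u$ was an arbitrary point of $\inte\K(K,f)^*$, the conclusion follows.

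The only delicate point I anticipate is the bookkeeping of the two distinct normalizations of the optimality inequality: dividing by $\|x^k\|^\alpha$ is what locates $\bar x$ in the kernel (with $\alpha>1$ being precisely what annihilates the linear term and the boundedness-from-below hypothesis being what upgrades $0\ge f^\infty(\bar x)$ to $f^\infty(\bar x)=0$), whereas dividing by $\|x^k\|$ is what extracts the sign relation $\langle u^*,\bar x\rangle\ge0$ that clashes with $u^*\in\inte\K(K,f)^*$. Everything else is a routine adaptation of the arguments already used in Theorems~\ref{thm:3} and \ref{usc_1}.
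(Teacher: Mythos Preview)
Your proposal is correct and follows essentially the same approach as the paper: establish local boundedness of $\Sa$ by a contradiction argument, locate the normalized limit $\bar x$ in $\K(K,f)$ via the $\|x^k\|^{\alpha}$-normalization combined with the lower bound $f\geq\gamma$, and then derive $\langle u^*,\bar x\rangle\geq 0$ via the $\|x^k\|$-normalization to contradict membership in $\inte\K(K,f)^*$. If anything, you are slightly more careful than the paper in distinguishing the subsequential limit $u^*\in\overline\Bo(\bar u,\varepsilon)\subset\inte\K(K,f)^*$ from the center $\bar u$, and in spelling out why $\bar x\in\K(K,f)$ (the paper simply asserts this by reference to Theorem~\ref{usc_1}, whose argument, however, used convexity of $K$; your route through Theorem~\ref{thm:3}'s normalization is the one that actually works here without that assumption).
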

    \begin{proof} From Proposition \ref{prop:nonempty}, $\K(K,f)$ is nonempty.
        By Remarks \ref{local} and \ref{graph}, we need only to prove that $\Sa$ is locally bounded at $u$, for $u\in \inte(\K(K,f)^*)$.
        Let $\varepsilon > 0$ be given, $\Bo(u,\varepsilon)$ and $\overline\Bo(u,\varepsilon)$ be the open ball and the closed ball, respectively, centered at $u$ of radius $\varepsilon$. Consider the following sets:
        \begin{equation}\label{M_eps}
            L:=\bigcup_{u\in \Bo( u,\varepsilon)} \Sa(u)\subset \bigcup_{u\in \overline\Bo(u,\varepsilon)} \Sa(u)=:R.
        \end{equation}
        We will show that $R$ is bounded.
        Suppose on the contrary that $R$ is unbounded. Then, there exist an unbounded sequence $\{x^k\}$ and a bounded sequence $\{u^k\}\subset\overline\Bo( u,\varepsilon)$ such that $x^k$ solves $\OP(K,f_{u^k})$ with $x^k\neq 0$ for every $k$, $\|x^k\|\to+\infty$, and
        $\|x^k\|^{-1}x^k\to\bar x$ with $\|\bar x\|=1$.
        By the compactness of $\overline\Bo(u,\varepsilon)$,  we can assume that $u^k\to \bar u$ with $\bar u\in\overline\Bo(u,\varepsilon)$.

        By assumptions, for every $k$, one has
        \begin{equation}\label{VI_k}
            f(y)+\left\langle u^k,y\right\rangle \geq f(x^k)+\left\langle u^k,x^k\right\rangle
        \end{equation}
        for all $y\in K$. Fix $\bar y \in K$. Then
        \eqref{VI_k} yields $f(\bar y)+\left\langle u^k,\bar y\right\rangle \geq f(x^k)+\left\langle u^k,x^k\right\rangle$. Dividing this inequality by $\|x^k\|^{\alpha}$ and letting $k\to+\infty$, we get $0 \geq f^{\infty}(\bar x)$.
It follows from the nonemptiness of $\K(K,f)$ and Remark \ref{rmk2} that $\bar x \in \K(K,f)$. Furthermore, since $f$ is bounded from below on $K$ by $\gamma$, from
        \eqref{VI_k} we have
        $$	f(\bar y)+\left\langle u^k,\bar y\right\rangle -\gamma \geq \left\langle u^k,x^k\right\rangle.$$
        Dividing this inequality by $\|x^k\|$ and letting $k\to+\infty$, we get $\left\langle \bar u,\bar x\right\rangle \leq 0$.
        Since $\bar u\in \inte(\K(K,f)^*)$ and $\bar x \in \K(K,f)$, we have $\left\langle \bar u,\bar x\right\rangle > 0$. Hence, we have an contradiction.
        \qed
    \end{proof}

    When the kernel is trivial, we get the upcoming corollary of Theorem \ref{usc_int}

    \begin{corollary}\label{usc_1} If $\K(K,f)=\{0\}$, then $\Sa$ is upper semicontinuous on $\R^n$.
    \end{corollary}
    \begin{proof}
        The proof is straightforward from Theorem \ref{usc_int} with the note that the dual cone of $\{0\}$ is the whole space $\R^n$. \qed
    \end{proof}

    \begin{example}The solution map of the optimization problem in Example \ref{ex:4} is upper
        semicontinuous on the open half-plane $\{(u_1,u_2)\in\R^2: u_1 > 0\}$ by Theorem \ref{usc_int}.
    \end{example}

    \begin{theorem}\label{thm:usc_Kinfty}
        Assume that $K$ is convex, $f$ is differentiable, convex, and bounded from below on $K$. Then, $\Sa$ is upper semicontinuous on $\inte(\D(K,f))$.
    \end{theorem}
    \begin{proof} From Proposition \ref{prop:nonempty}, $\K(K,f)$ is nonempty. Let $u\in \inte(\D(K,f))$ and take $\varepsilon>0$ such that $\overline\Bo(u,\varepsilon)\subset \inte(\D(K,f))$. We  need to prove that $\Sa$ is locally bounded at $u$. By repeating the argument and the notations from the proof of Theorem \ref{usc_1} with two sequences $\{x^k\}$, $\{u^k\}$ such that $x^k$ solves $\OP(K,f_{u^k})$, $\{x^k\}$ is unbounded, with $\|x^k\|^{-1}x^k\to\bar x$ and $u^k\to \bar u$, we can prove that $\bar x\in\K(K,f)$. Clearly, $f_{u^k}$ is convex on $K$. By Remark~\ref{rmk:Minty}, one has
        $\left\langle \nabla f(x)+u^k,x-x^k \right\rangle \geq 0,
        $ for all $x\in K$.	We fix $x\in K$, divide the last inequality by $\|x^k\|$, and let $k\to+\infty$; then we get $\left\langle \nabla f(x) + \bar u,\bar x \right\rangle \leq 0$. From Lemma~\ref{lm:conc}, we conclude that
        $u\notin\inte(\D(K,f))$. This contradicts our assumption. \qed
    \end{proof}

    The following example illustrates the previous theorem.

    \begin{example}
        Consider the convex optimization problem given as in Example~\ref{ex:6}. As $\D(K,f) = \R^2 = \Ri(K,f)$, from Theorem \ref{thm:usc_Kinfty}, $\Sa$ is upper semicontinuous on $\R^2$.
    \end{example}

    \section*{Acknowledgments} The author would like to express his deep gratitude to the two referees for their corrections and valuable comments which helped to improve the manuscript. This work has been supported by European Union’s Horizon 2020 research and innovation programme under the Marie Skłodowska-Curie Actions, grant agreement 813211 (POEMA).


\end{document}